\documentclass[12pt,reqno]{amsart}
\usepackage{amsthm,amssymb,latexsym,cite,amsmath,hyperref}
\usepackage{fullpage}

\newtheorem{theorem}{Theorem}[section]
\newtheorem{lemma}[theorem]{Lemma}

\newtheorem{definition}[theorem]{Definition}

\numberwithin{equation}{section}
\setcounter{tocdepth}{1}

\begin{document}

\title{The distribution of $r$-free numbers in arithmetic progressions}
\author{D. Jason Gibson}
\address{Eastern Kentucky University}
\email{jason.gibson@eku.edu}
\date{\today}
\subjclass[2010]{11N25 (Primary), 11N69 (Secondary).}
\keywords{Squarefree numbers, arithmetic progressions}

\begin{abstract}

Let $r\ge 2$.  A positive integer $n$ is called $r$-free
if $n$ is not divisible by the $r$-th power of a prime.
Generalizing earlier work of Orr, we provide
an upper bound of Bombieri-Vinogradov type
for the $r$-free numbers in arithmetic progressions.
\end{abstract} 

\maketitle
\section{Introduction}

Let $\mu_r(n)$ be the characteristic function of the $r$-free
numbers, so that, with $\mu(n)$ denoting the M\"obius
function, we have
\begin{equation}
\mu_r(n) = \sum_{d^r|n}\mu(d).
\end{equation}
To count the $r$-free numbers within arithmetic progressions,
consider the expression
\begin{equation}
R(x;k,l)=\sum_{\substack{n\le x\\n\equiv l\bmod k}}\mu_r(n)=
\sum_{\substack{n\le x\\n\equiv l\bmod k}}\sum_{d^r|n}\mu(d).
\end{equation}
Defining an arithmetic function $f=f_r$ simplifies
the statement of the main term in the asymptotic
count of these $r$-free numbers.
\begin{definition}[An arithmetic function]
For $r\ge 1$, define the arithmetic function $f=f_r$ by
\begin{equation}
f(k)=\prod_{p\nmid k }\left(1-\frac{1}{p^r}\right).
\end{equation}
\end{definition}

Suppose that $(l,k)=g$, where $g$ is $r$-free.  Write
$k=gs$ and $l=gt$.
Now, define the error term $E(x;k,l)$ by
\begin{equation}
E(x;k,l) = R(x;k,l) - \frac{x}{k}\frac{\phi(k)}{g\phi(s)}f(k).
\end{equation}

The survey of Pappalardi \cite{papp} provides an organized
overview of some of the main lines of work on $r$-free
numbers, and, in addition, states some open problems.

Among other results, the paper of Meng \cite{meng}
establishes a new upper bound of Barban-Davenport-Halberstam
type for $r$-free numbers in arithmetic progressions.
Jancevskis \cite{jan}, examining a variation of this
problem distinct from the approaches of Meng and Orr,
used sieve sequences to produce upper bounds
for squarefree numbers in arithmetic progressions.
For the case $r=2$, the squarefree numbers, in \cite{orr_phd} (also see\cite{orr_jnt}), Orr proved both a result of this type and the 
Bombieri-Vinogradov type result
\begin{theorem}
For any constant $A$,
\begin{equation}
\sum_{k\le \frac{x^{2/3}}{\log^{A+1}x}}\max_{(l,k)=\text{squarefree}} |E(x;k,l)|\ll \frac{x}
{\log^Ax}.
\end{equation}
\end{theorem}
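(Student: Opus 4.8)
The plan is to begin from the identity, obtained by inserting the definition of $\mu_2$ and interchanging the two summations,
\[
R(x;k,l)=\sum_{d\le\sqrt x}\mu(d)\,\#\bigl\{\,n\le x:\ n\equiv l\bmod k,\ d^2\mid n\,\bigr\}.
\]
For each $d$ the inner count vanishes unless $(d^2,k)\mid l$, a condition that, since $(d^2,k)\mid k$, is equivalent to $(d^2,k)\mid g$; when it holds, $d^2\mid n$ together with $n\equiv l\bmod k$ confines $n$ to one residue class modulo $d^2 k/(d^2,k)$, so the count equals $\frac{(d^2,k)}{d^2}\cdot\frac xk+O(1)$. Extending the resulting main sum to all $d\ge 1$ costs $\ll\frac xk\sum_{d>\sqrt x}\mu(d)^2\frac{(d^2,k)}{d^2}[(d^2,k)\mid g]\ll\sqrt x\,\tau(k)/k$, and the function $f$ was introduced precisely so that a routine Euler product evaluation --- which I would carry out one prime at a time --- gives
\[
\frac xk\sum_{d\ge 1}\mu(d)\frac{(d^2,k)}{d^2}\bigl[(d^2,k)\mid g\bigr]=\frac xk\cdot\frac{\phi(k)}{g\phi(s)}f(k),
\]
the exact main term. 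Writing $N_d(k,l)=\#\{n\le x:n\equiv l\bmod k,\ d^2\mid n\}$ and $m_d=\frac{(d^2,k)}{d^2}\frac xk[(d^2,k)\mid g]$, we obtain
\[
E(x;k,l)=\sum_{d\le\sqrt x}\mu(d)\bigl(N_d(k,l)-m_d\bigr)+O\!\Bigl(\frac{\sqrt x\,\tau(k)}{k}\Bigr),
\]
and since $\sum_{k\le Q}\sqrt x\,\tau(k)/k\ll\sqrt x\log^2 x$, that remainder is negligible.

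I would then split the $d$-sum at $d=x^{1/3}$. For $d\le x^{1/3}$ each difference $N_d-m_d$ is $O(1)$ and at most $x^{1/3}$ values of $d$ are admissible, so this block contributes $O(Q\,x^{1/3})=O(x/\log^{A+1}x)$ to $\sum_{k\le Q}\max_l|E(x;k,l)|$ --- acceptable. For $d>x^{1/3}$ I would not extract a main term: the partial main sum $\sum_{x^{1/3}<d\le\sqrt x}\mu(d)m_d\ll x^{2/3}\tau(k)/k$ is itself negligible after summation over $k$, so, putting $n=d^2 b$ so that $b<x^{1/3}$, everything reduces to
\[
\sum_{k\le Q}\ \max_l\ \Bigl|\ \sum_{b<x^{1/3}}\ \sum_{\substack{x^{1/3}<d\le\sqrt{x/b}\\ d^2 b\equiv l\bmod k}}\mu(d)\ \Bigr|\ \ll\ \frac{x}{\log^A x}.
\]
Merely bounding the inner double sum by its number of terms, $\sum_{b<x^{1/3}}\sqrt{x/b}\asymp x^{2/3}$, gives only $Q\cdot x^{2/3}\asymp x^{4/3}/\log^{A+1}x$; even using the congruence $d^2 b\equiv l\bmod k$ to thin out the surviving $d$ leaves a quantity of size roughly $x/\log x$, still short of the target by a power of $\log x$. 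One is therefore forced to use genuine cancellation in the Möbius sum --- essentially square-root cancellation over the $\asymp x^{2/3}$ terms --- and this is exactly why $x^{2/3}$ is the level of distribution that comes out.

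To produce that cancellation I would fix $b$ and rewrite $d^2 b\equiv l\bmod k$ as a quadratic congruence $d^2\equiv c\bmod k'$ with $k'=k/(b,k)$ --- legitimate because $(b/(b,k),\,k')=1$ --- whose solution set is a union of $O(2^{\omega(k)})$ residue classes modulo $k'$, with the usual harmless modifications at the prime $2$ and when $c$ is not prime to $k'$. On each such class one must estimate $\sum_d\mu(d)$ with $d$ running over an interval of length at most $\sqrt x$ inside a fixed progression modulo $k'$: for $k'$ at most a fixed power of $\log x$ this is the Siegel--Walfisz theorem for the Möbius function, and for larger $k'$ one detects the congruence with additive characters (so that the extremal $l$ enters only through coefficients of modulus $1$) and is reduced to the classical Type~I and Type~II sums in $\mu$, which --- after averaging over $k$ and over $b$ --- are controlled by combining a Vaughan-type identity for $\mu$ with the large-sieve inequality. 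The decisive structural feature is that $b$ is confined to the short range $[1,x^{1/3}]$; it is this, together with the average over the modulus, that pushes the level of distribution past the $x^{1/2}$ a bare large-sieve estimate would supply, up to $x^{2/3}$. I expect this bilinear estimate to be the real obstacle; the surrounding manipulations with the greatest common divisors $g$, $(b,k)$, $(d^2,k)$ and the behaviour at $p=2$ are routine but lengthy.
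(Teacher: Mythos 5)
Your setup---the interchange of summation, the extraction of the main term, the split of the $d$-sum at $x^{1/3}$, and the substitution $n=d^2b$ with $b<x^{1/3}$ in the large-$d$ range---coincides with the paper's argument (Lemmas \ref{small_d} and \ref{large_d} specialized to $r=2$, with $z=x^{1/(r+1)}=x^{1/3}$). The gap is in your accounting for the large-$d$ block. You claim that using the congruence $d^2b\equiv l\bmod k$ to thin out the $d$'s still leaves ``roughly $x/\log x$,'' and you conclude that genuine (square-root) cancellation in the M\"obius sum is therefore indispensable. That count is wrong. For fixed $b$, the admissible $d$ in $(x^{1/3},\sqrt{x/b}]$ lie in $O(2^{\omega(k)})$ residue classes modulo $k'=k/(b,k)$, so their number is $\ll 2^{\omega(k)}\bigl(\sqrt{x/b}/k'+1\bigr)$. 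The ``$+1$ per class per $b$'' piece, summed over $b<x^{1/3}$ and then over $k\le Q=x^{2/3}/\log^{A+1}x$, is $\ll x^{1/3}\sum_{k\le Q}2^{\omega(k)}\ll x^{1/3}\,Q\log x=x/\log^{A}x$: exactly the target, and the extra power of $\log$ in the definition of $Q$ is calibrated precisely to absorb the $\log$ coming from the divisor-type sum. The $\sqrt{x/b}/k'$ piece contributes $\ll x^{2/3}$ times powers of $\log x$ after summation over $k$, far below the target. This is the content of Lemma \ref{large_d}: drop $\mu(d)$ entirely, swap the roles of $d$ and the cofactor $u\le x/(gz^2)\le x^{1/3}$, and count the $\ll 2^{\omega(s)}$ roots of $d^2\equiv tu^{-1}\bmod s$ per interval of length $\sqrt{x/(gu)}$. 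No cancellation in $\mu$ is used anywhere in the proof.

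The route you propose to supply the (nonexistent) missing saving would not work. In the critical regime $k\asymp x^{2/3}$ and $b$ close to $x^{1/3}$, the modulus $k'$ exceeds the length $\sqrt{x/b}$ of the $d$-interval, so each residue class contains at most one term and there is nothing to cancel; and in the complementary regime a Vaughan-identity-plus-large-sieve treatment of $\mu$ in arithmetic progressions is only known to reach level of distribution $x^{1/2}$---your heuristic that the short range of $b$ pushes this to $x^{2/3}$ is unsubstantiated and would, if true, be a substantial theorem in its own right. The actual reason $x^{2/3}$ emerges as the level of distribution is elementary: it is the exponent at which the two error terms $x/(kz)$ and $z$ (from the small-$d$ lemma) balance against the count $x/(gz^2)$ of cofactors, optimized at $z=x^{1/3}$.
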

In the sequel, we generalize this result to $r$-free numbers.

\section{Distribution, on average}

\begin{theorem}\label{rfree_t}
Let $r\ge 2$ be an integer.  Then, for any constant $A$,
\begin{equation}
\sum_{k\le \frac{x^{1-1/(r+1)}}{\log^{A+r-1}x}} \max_{(l,k)=r\text{-free}}|E(x;k,l)|\ll
\frac{x}{\log^Ax}.
\end{equation}
\end{theorem}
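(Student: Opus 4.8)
The plan is to unfold $R(x;k,l)$ through the identity $\mu_r(n)=\sum_{d^r\mid n}\mu(d)$, to truncate the resulting sum over $d$ at a parameter $U$ of size $x^{1/(r+1)}$, and to estimate the two ranges by different elementary devices, so that after summing over $k\le Q:=x^{1-1/(r+1)}/\log^{A+r-1}x$ the only analytic inputs are the divisor mean values $\sum_{k\le Q}r^{\omega(k)}\ll Q\log^{r-1}x$ and $\sum_{k\le Q}r^{\omega(k)}/k\ll\log^{r}x$. One may first assume $(l,k)=1$: in the general case, writing $k=gs$, $l=gt$ and substituting $n=gm$ turns $R(x;k,l)$ into $\sum_{m\le x/g,\ m\equiv t(s)}\mu_r(gm)$, and factoring $m=ab$ with $a$ supported on the primes dividing $g$ and $(b,g)=1$ one has $\mu_r(gm)=\mu_r(b)$ when $v_p(a)\le r-1-v_p(g)$ for all $p$ and $\mu_r(gm)=0$ otherwise; this reduces matters to counting $r$-free $b$ coprime to $g$ in a progression modulo $s$ in a range of length $\le x/(ga)$, and re-summing over the auxiliary variables $g$ and $a$ (using $\sum_{a\mid g^\infty}a^{-1}\ll g/\phi(g)$) is routine bookkeeping.

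So assume $(l,k)=1$, in which case the main term is $\tfrac xk f(k)$ and $f(k)=\sum_{(d,k)=1}\mu(d)d^{-r}$. Unfolding gives
\[
R(x;k,l)=\sum_{\substack{d\le x^{1/r}\\(d,k)=1}}\mu(d)\,N_d(k,l),\qquad N_d(k,l)=\#\{m\le x/d^r:\ d^r m\equiv l\pmod k\},
\]
and since $N_d(k,l)=x/(kd^r)+O(1)$, completing the $d$-sum in the definition of $f(k)$ costs only $O(x^{1/r}/k)$, hence $O(x^{1/r}\log x)$ after summation over $k$. In the range $d\le U$ I would discard the discrepancies $\rho_d(k,l):=N_d(k,l)-x/(kd^r)$ trivially via $|\rho_d|<1$, at cost $\ll U$ per pair $(k,l)$ and so $\ll QU=x/\log^{A+r-1}x$ in total; this furnishes half the bound and already explains why the admissible range of $k$ stops near $x^{1-1/(r+1)}$, since $U$ must be chosen $\asymp x^{1/(r+1)}$.

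The crux is the range $U<d\le x^{1/r}$, where the $\rho_d(k,l)$ are individually $\ll 1$ but too numerous for a trivial bound (that would give $\gg x^{1/r}$ per pair, exceeding the target by a power of $x$ after summing over $k$). The device is to interchange summations: the contribution of this range to $R(x;k,l)$ is $\sum_{m}\sum_{d}\mu(d)$, where for each $m$ the variable $d$ runs over $U<d\le(x/m)^{1/r}$ subject to $d^r m\equiv l\pmod k$; necessarily $(m,k)=1$, so this congruence pins $d$ to the $r$-th roots of $l\overline m$ modulo $k$, of which there are $\ll r^{\omega(k)}$, whence the inner M\"obius sum is trivially $\ll (x/m)^{1/r}/k+1$ class by class. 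Summing over $m\le x/U^{r}$ and absorbing the matching main-term tail $\tfrac xk\sum_{U<d\le x^{1/r},\,(d,k)=1}\mu(d)d^{-r}\ll x/(kU^{r-1})$ yields, uniformly in $l$,
\[
\Bigl|\sum_{\substack{U<d\le x^{1/r}\\(d,k)=1}}\mu(d)\,\rho_d(k,l)\Bigr|\ \ll\ r^{\omega(k)}\Bigl(\frac{x}{kU^{r-1}}+\frac{x}{U^{r}}\Bigr).
\]
Summing over $k\le Q$ with the two mean values above gives $\ll x^{2/(r+1)}\log^{r}x+x^{1/(r+1)}Q\log^{r-1}x\ll x/\log^{A}x$, where the choice $U\asymp x^{1/(r+1)}$ and the factor $\log^{-(r-1)}x$ built into $Q$ are exactly what bring both terms below $x/\log^A x$; together with the small-$d$ contribution and the $O(x^{1/r}\log x)$ remainder, this proves the theorem.

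I expect the interchange step to be the only real obstacle. The points that must be handled with care are: that for fixed $m$ the congruence $d^r m\equiv l\pmod k$ genuinely confines $d$ to $\ll r^{\omega(k)}$ residue classes, uniformly in $k$ and $l$ (the correct power of $r$ here is what forces the $\log^{-(r-1)}x$ in the range, via $\sum_{k\le Q}r^{\omega(k)}\ll Q\log^{r-1}x$); that $U$ be calibrated so that the small-$d$ total $QU$, the "collecting'' sum $\sum_k r^{\omega(k)}x/U^r$, and the residual sum $\sum_k r^{\omega(k)}x/(kU^{r-1})$ all fall below $x/\log^A x$ simultaneously for $k$ up to $x^{1-1/(r+1)}/\log^{A+r-1}x$; and the reduction to $(l,k)=1$, which is straightforward but requires the auxiliary sums over $g$ and over the $g$-part $a$ of $m$ to be kept under control. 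No ingredient beyond elementary counting, divisor bounds, and convergence of Dirichlet series is needed.
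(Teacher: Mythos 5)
Your proposal is correct and, in its analytic core, it is the same proof as the paper's: the same unfolding of $\mu_r$, the same split of the $d$-sum at $x^{1/(r+1)}$, the same interchange of summation in the large-$d$ range with the number of solutions of $d^r\equiv c\pmod{s}$ bounded by $\ll r^{\omega(s)}$, and the same closing appeal to $\sum_{k\le Q}\tau_r(k)\ll Q\log^{r-1}Q$ together with $\sum_{k\le Q}\tau_r(k)/k\ll\log^{r}Q$. The one genuine divergence is the handling of $g=(l,k)>1$. The paper starts from the identity $R(x;k,l)=\sum_{m\le x/g,\ m\equiv t\ (\mathrm{mod}\ s),\ (m,g)=1}\mu_r(m)$ and removes the condition $(u,g)=1$ by a M\"obius sum over the divisors of $g$, carrying $g$ through both lemmas; you instead factor $m=ab$ with $a\mid g^\infty$ and use that $\mu_r(gm)=\mu_r(b)$ precisely when $v_p(a)\le r-1-v_p(g)$ for all $p$. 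Your version is the more scrupulous one: for $r\ge 3$ the paper's opening identity silently discards the terms with $(m,g)>1$ for which $gm$ is nevertheless $r$-free, so the quantity the paper actually estimates is not $R(x;k,l)$ as defined in (1.2), and the stated main term $\tfrac{x}{k}\tfrac{\phi(k)}{g\phi(s)}f(k)$ is not the true density whenever some prime of $g$ fails to divide $s$ (already $r=3$, $k=2$, $l=0$: the even cube-free numbers have density $\tfrac38\prod_{p>2}(1-p^{-3})$, not $\tfrac14\prod_{p>2}(1-p^{-3})$). Carrying out your bookkeeping would produce the corrected main term; be aware, though, that it is not a clean reduction to $(l,k)=1$ --- the condition $(b,g)=1$ survives and must still be removed by the same M\"obius device the paper uses, and the congruence $ab\equiv t\pmod{s}$ forces $(a,s)=1$ since $(t,s)=1$ --- so the ``routine bookkeeping'' is exactly where your treatment and the paper's part ways, and where the two main terms differ.
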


\begin{proof} We follow the argument of Orr \cite{orr_phd} closely.
A M\"obius sum detects $r$-free numbers. Changing
the order of summation, along with the estimation of some sums,
allows an error term to be split off from the main asymptotic term.
We obtain
\begin{align}
R(x;k,l)&=\sum_{\substack{m\le x/g\\
m\equiv t\bmod s\\
(m,g)=1}}\sum_{d^r|m}\mu(d)
=\sum_{\substack{d\le (x/g)^{1/r}\\
(d,g)=1}}\sum_{\substack{m\le x/g\\
m\equiv t\bmod s\\
(m,g)=1\\
m\equiv 0\bmod d^r}}\mu(d)\\
&=\sum_{\substack{d\le (x/g)^{1/r}\\
(d,g)=1\\
(d,s)=1}}\sum_{\substack{u\le x/(gd^r)\\
u\equiv t(d^r)^{-1}\bmod s\\
(u,g)=1}} 1\\
&=\sum_{\substack{
d\le z\\
(d,k)=1}}
\mu(d)
\sum_{\substack{
u\le x/(gd^r)\\
u\equiv t(d^r)^{-1}\bmod s\\
(u,g)=1}}
1
+
\sum_{\substack{
z<d\le (x/g)^{1/r}\\
(d,k)=1}}
\mu(d)
\sum_{\substack{
u\le x/(gd^r)\\
u\equiv t(d^r)^{-1}\bmod s\\
(u,g)=1}}
1.
\end{align}

Invoking Lemmas \ref{small_d} and \ref{large_d}, this yields
\begin{align}
R(x;k,l)=\frac{x}{k}\frac{\phi(k)}{g\phi(s)}f(k)+O\left(2^{\omega(g)}z
+r^{\omega(s)}\left(\frac{x}{kz^{r-1}}+\frac{x}{gz^r}\right)\right).
\end{align}
The choice $z=x^{1/(1+r)}$ gives
\begin{align}
E(x;k,l)&=R(x;k,l)-\frac{x}{k}\frac{\phi(k)}{g\phi(s)}f(k)\\
&\ll 2^{\omega(k)}x^{1/(r+1)}+
r^{\omega(k)}\left(
\frac{1}{k}x^{2/(r+1)}+x^{1/(r+1)}\right).
\end{align}

Finally, the use of Lemma \ref{divisors} gives the estimate
\begin{align}
\sum_{k\le \frac{x^{1-1/(r+1)}}{\log^{A+r-1}x}} \max_{(l,k)=r\text{-free}}|E(x;k,l)|&\ll
\sum_{k\le \frac{x^{1-1/(r+1)}}{\log^{A+r-1}x}} r^{\omega(k)}\left(
\frac{1}{k}x^{2/(r+1)}+x^{1/(r+1)}\right)\\
&\ll \sum_{k\le \frac{x^{1-1/(r+1)}}{\log^{A+r-1}x}} \tau_r(k)\left(
\frac{1}{k}x^{2/(r+1)}+x^{1/(r+1)}\right)\\
&\ll \frac{x}{\log^A x},
\end{align}
completing the proof of Theorem \ref{rfree_t}.
\end{proof}

\section{Some estimates}

\begin{lemma}[Small $d$ estimate]\label{small_d}
Let $r\ge 2$ be an integer.
Then
\begin{equation}
\sum_{\substack{
d\le z\\
(d,k)=1}}
\mu(d)
\sum_{\substack{
u\le x/(gd^r)\\
u\equiv t(d^r)^{-1}\bmod s\\
(u,g)=1}}
1
= 
\frac{x}{k}\frac{\phi(k)}{g\phi(s)}f(k)+O\left(\frac{x}{k}z^{1-r}+2^{\omega(g)}z\right).
\end{equation}
\end{lemma}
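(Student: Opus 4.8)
The plan is to evaluate the inner sum over $u$ exactly, then carry out the sum over $d$ and complete the resulting series to infinity. First I would detect the condition $(u,g)=1$ with a second M\"obius sum,
\begin{equation}
\sum_{\substack{u\le x/(gd^r)\\u\equiv t(d^r)^{-1}\bmod s\\(u,g)=1}}1
=\sum_{e\mid g}\mu(e)\sum_{\substack{u\le x/(gd^r)\\u\equiv t(d^r)^{-1}\bmod s\\u\equiv 0\bmod e}}1 .
\end{equation}
Since $(d,k)=1$ gives $(d,s)=1$ while $(l,k)=g$ forces $(t,s)=1$, the residue $t(d^r)^{-1}$ is a unit modulo $s$, so the system $u\equiv t(d^r)^{-1}\bmod s$, $u\equiv 0\bmod e$ is solvable precisely when $(e,s)=1$, in which case the Chinese Remainder Theorem produces a single class modulo $se$. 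Counting the integers of $[1,x/(gd^r)]$ in that class as $x/(gd^r se)+O(1)$ and noting that $g$ has at most $2^{\omega(g)}$ squarefree divisors, the inner sum becomes
\begin{equation}
\frac{x}{gsd^r}\sum_{\substack{e\mid g\\(e,s)=1}}\frac{\mu(e)}{e}+O\!\left(2^{\omega(g)}\right)
=\frac{x}{kd^r}\cdot\frac{\phi(k)}{g\phi(s)}+O\!\left(2^{\omega(g)}\right),
\end{equation}
where I use $k=gs$ together with the identity $\sum_{e\mid g,\,(e,s)=1}\mu(e)/e=\prod_{p\mid g,\,p\nmid s}(1-1/p)=\phi(k)/(g\phi(s))$.

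Substituting this into the sum over $d\le z$ with $(d,k)=1$, the $O(2^{\omega(g)})$ term contributes $O(2^{\omega(g)}z)$, while the main part equals
\begin{equation}
\frac{x}{k}\cdot\frac{\phi(k)}{g\phi(s)}\sum_{\substack{d\le z\\(d,k)=1}}\frac{\mu(d)}{d^r} .
\end{equation}
Because $r\ge 2$, the tail of the corresponding infinite series is $O\!\left(\sum_{d>z}d^{-r}\right)=O\!\left(z^{1-r}\right)$, so
\begin{equation}
\sum_{\substack{d\le z\\(d,k)=1}}\frac{\mu(d)}{d^r}=\prod_{p\nmid k}\left(1-\frac{1}{p^r}\right)+O\!\left(z^{1-r}\right)=f(k)+O\!\left(z^{1-r}\right).
\end{equation}
Since $\phi(k)/(g\phi(s))=\prod_{p\mid g,\,p\nmid s}(1-1/p)\le 1$, multiplying back turns this into an error of size $O\!\left(\tfrac{x}{k}z^{1-r}\right)$, and collecting the two error terms yields the lemma.

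I expect the only real difficulty to be bookkeeping: one must track the coprimality constraints carefully enough to see that after the Chinese Remainder step only the divisors $e\mid g$ with $(e,s)=1$ survive, and then recognize the truncated product $\sum_{e\mid g,\,(e,s)=1}\mu(e)/e$ as precisely $\phi(k)/(g\phi(s))$, so that the main term matches the one in the definition of $E(x;k,l)$. The estimate $\phi(k)/(g\phi(s))\le 1$ is what keeps the completion-of-series error at $O\!\left(\tfrac{x}{k}z^{1-r}\right)$ rather than something larger.
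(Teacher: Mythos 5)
Your proof is correct and follows essentially the same route as the paper: both detect $(u,g)=1$ by a M\"obius sum over divisors of $g$, discard divisors not coprime to $s$, count the residue class with an $O(1)$ error contributing $O(2^{\omega(g)}z)$, and complete the sum $\sum_{(d,k)=1}\mu(d)/d^r$ to get $f(k)$ with tail $O(z^{1-r})$. The only cosmetic difference is that the paper pulls the divisor sum over $g$ outside the sum over $d$ before counting, and you are somewhat more explicit about the Chinese Remainder step and the identity $\sum_{e\mid g,\,(e,s)=1}\mu(e)/e=\phi(k)/(g\phi(s))$, which the paper uses silently.
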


\begin{proof} We have
\begin{align}
\sum_{\substack{
d\le z\\
(d,k)=1}}
\mu(d)
\sum_{\substack{
u\le x/(gd^r)\\
u\equiv t(d^r)^{-1}\bmod s\\
(u,g)=1}}
1
&=
\sum_{\substack{d\le z\\
(d,k)=1}} \mu(d)\sum_{\substack{
u\le x/(gd^r)\\
u\equiv t(d^r)^{-1}\bmod s}} \sum_{\substack{
v|g\\
v|u}}\mu(v)\\
&=\sum_{\substack{v|g\\
(v,s)=1}}
\mu(v)
\sum_{\substack{
d\le z\\
(d,k)=1}}
\mu(d)
\sum_{\substack{
h\le x/(gd^rv)\\
h\equiv t(d^rv)^{-1}\bmod s}}
1\\
&=\sum_{\substack{v|g\\
(v,s)=1}}
\mu(v)
\sum_{\substack{
d\le z\\
(d,k)=1}}
\mu(d)\left(
\frac{x}{gd^rv}\frac{1}{s}+O(1)\right)\\
&=\frac{x}{k}\sum_{\substack{
v|g\\
(v,s)=1}}
\frac{\mu(v)}{v}
\sum_{\substack{
d\le z\\
(d,k)=1}}
\frac{\mu(d)}{d^r}+O\left(2^{\omega(g)}z\right).
\end{align}
A completion and rearrangement of that latter term
yields the conclusion, becoming
\begin{align}
\frac{x}{k}\frac{\phi(k)}{g\phi(s)}\left(
\sum_{(d,k)=1}\frac{\mu(d)}{d^r}+O\left(\sum_{d>z}\frac{1}{d^r}\right)\right)
&+O\left(2^{\omega(g)}z\right)\\
&=\frac{x}{k}\frac{\phi(k)}{g\phi(s)}f(k)+O\left(\frac{x}{k}z^{1-r}+
2^{\omega(g)}z\right).
\end{align}
\end{proof}

\begin{lemma}[Large $d$ estimate]\label{large_d}
Let $r\ge 2$ be an integer.
Then
\begin{equation}
\sum_{\substack{
z<d\le (x/g)^{1/r}\\
(d,k)=1}}
\mu(d)
\sum_{\substack{
u\le x/(gd^r)\\
u\equiv t(d^r)^{-1}\bmod s\\
(u,g)=1}}
1
\ll r^{\omega(s)}\left(\frac{x}{kz^{r-1}}+\frac{x}{gz^r}\right).
\end{equation}
\end{lemma}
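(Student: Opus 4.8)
The plan is to first discard the sign of $\mu(d)$ and the condition $(u,g)=1$ (each only helps an upper bound), reducing the task to bounding the nonnegative quantity
\[
S:=\sum_{\substack{z<d\le (x/g)^{1/r}\\(d,k)=1}}\ \sum_{\substack{u\le x/(g d^r)\\ u\equiv t(d^r)^{-1}\bmod s}}1 ,
\]
and then to split $S=S_A+S_B$ according to whether $x/(g d^r)\ge s$ or $x/(g d^r)<s$, i.e.\ at $d=(x/(gs))^{1/r}=(x/k)^{1/r}$. (We may assume $z<(x/g)^{1/r}$, else $S$ is empty.) In the range $S_A$, where $d\le (x/k)^{1/r}$, the inner sum counts integers $\le x/(g d^r)$ in a single residue class modulo $s$, hence is at most $x/(g d^r s)+1\le 2x/(g d^r s)=2x/(k d^r)$, the last two steps using $x/(g d^r)\ge s$ and $k=gs$; summing the tail $\sum_{d>z}d^{-r}\ll z^{1-r}$ gives $S_A\ll x/(kz^{r-1})$.

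The content of the lemma lies in the range $S_B$, where $d>(x/k)^{1/r}$, so $x/(g d^r)<s$. Since now the window $[1,x/(g d^r)]$ has length less than $s$, the inner sum over $u$ is $0$ or $1$, and it equals $1$ exactly when the least positive residue $a_d:=\langle t(d^r)^{-1}\rangle_s$ satisfies $a_d\le x/(g d^r)$. I would then reorganize the surviving $d$ by the value $b$ of $a_d$: the condition $a_d=b$ forces $(b,s)=1$ and pins $d^r$ to the single residue $tb^{-1}\bmod s$; the condition $a_d\le x/(g d^r)$ becomes $d\le (x/(gb))^{1/r}$; and nonemptiness of the resulting $d$-window (together with the constraint $d>z$) forces $b<x/(gz^r)$. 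This gives
\[
S_B\le\sum_{\substack{1\le b<x/(gz^r)\\(b,s)=1}}\ \#\bigl\{\, z<d\le (x/(gb))^{1/r}\ :\ (d,s)=1,\ d^r\equiv tb^{-1}\bmod s \,\bigr\}.
\]
The crucial point is that $d^r\equiv tb^{-1}\bmod s$ confines $d$ to $\ll r^{\omega(s)}$ residue classes modulo $s$, since $x^r\equiv c\bmod s$ has $\ll r^{\omega(s)}$ solutions for any unit $c$ (by multiplicativity over prime powers and the cyclicity of $(\mathbb{Z}/p^a\mathbb{Z})^\times$ for odd $p$). Hence the inner count is $\ll r^{\omega(s)}\bigl((x/(gb))^{1/r}/s+1\bigr)$, and summing over $b<x/(gz^r)$ with $\sum_{b<B}b^{-1/r}\ll B^{1-1/r}$ yields
\[
S_B\ll r^{\omega(s)}\Bigl(\frac{(x/g)^{1/r}}{s}\Bigl(\frac{x}{gz^r}\Bigr)^{1-1/r}+\frac{x}{gz^r}\Bigr)=r^{\omega(s)}\Bigl(\frac{x}{kz^{r-1}}+\frac{x}{gz^r}\Bigr),
\]
the last equality being $k=gs$ together with $z^{r-1}=z^{r(1-1/r)}$. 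Combining the two ranges proves the lemma.

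The main obstacle is precisely $S_B$: the trivial ``main term plus one unit'' bound on the inner $u$-sum is genuinely too weak, as it would contribute a term of size $(x/g)^{1/r}$ that the right-hand side cannot absorb. The remedy is to observe that a nonempty inner sum forces the small residue $a_d$ to be $\le x/(g d^r)$, and then to count $d$ through the congruence $d^r\equiv tb^{-1}\bmod s$ — which is also exactly where the factor $r^{\omega(s)}$ originates.
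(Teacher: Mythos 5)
Your proposal is correct and is essentially the paper's own argument: after discarding $\mu(d)$ and the coprimality condition, both proofs reorganize the double sum so that $d$ is counted via the congruence $d^r\equiv tu^{-1}\bmod s$ (your $b$ is the paper's $u$), invoke the bound $\ll r^{\omega(s)}$ on the number of $r$-th roots of a unit modulo $s$, and conclude with the identical computation $\sum_{u\le x/(gz^r)}u^{-1/r}\ll (x/(gz^r))^{1-1/r}$. Your preliminary split at $d=(x/k)^{1/r}$ is an unnecessary (though harmless) refinement — the paper's single interchange of the order of summation handles both ranges at once.
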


\begin{proof}
We have
\begin{align}
\sum_{\substack{
z<d\le (x/g)^{1/r}\\
(d,k)=1}}
\mu(d)
\sum_{\substack{
u\le x/(gd^r)\\
u\equiv t(d^r)^{-1}\bmod s\\
(u,g)=1}}
1&\ll
\sum_{\substack{
z<d\le (x/g)^{1/r}\\
(d,k)=1}}
\sum_{\substack{
u\le x/(gd^r)\\
u\equiv t(d^r)^{-1}\bmod s\\}}
1\\
&\ll \sum_{u\le x/(gz^r)}\sum_{\substack{d\le (x/gu)^{1/r}\\
d^r\equiv tu^{-1}\bmod s}}1.
\end{align}

The number of solutions to $d^r\equiv tu^{-1}\bmod s$
can be bounded as $\ll r^{\omega(s)}$ (see, e.g., 4.2.1 and 4.2.2 of Ireland and Rosen \cite{IR}).  Our sum can then be estimated as
\begin{align}
\sum_{u\le x/(gz^r)}\sum_{\substack{d\le (x/gu)^{1/r}\\
d^r\equiv tu^{-1}\bmod s}}1&\ll
\sum_{u\le x/(gz^r)}r^{\omega(s)}\left(\frac{1}{s}\left(\frac{x}{gu}\right)
^{1/r}+1\right)\\
&\ll r^{\omega(s)}\left(\frac{1}{s}\left(\frac{x}{g}\right)^{1/r}\left(
\frac{x}{gz^r}\right)^{1-1/r}+\frac{x}{gz^r}\right)\\
&\ll r^{\omega(s)}\left(\frac{x}{kz^{r-1}}+\frac{x}{gz^r}\right).
\end{align}
\end{proof}

\begin{lemma}[Sum of divisors estimate] \label{divisors}
Let $r\ge 1$, and let $\tau_r(n)$ denote the number of representations of
$n$ as the product of $r$ natural numbers.  Then
\begin{equation}
\sum_{n\le x}\tau_r(n)\ll x(\log x)^{r-1}.
\end{equation}
\end{lemma}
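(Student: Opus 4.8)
The plan is to prove the estimate by induction on $r$, using the convolution identity $\tau_r(n)=\sum_{d\mid n}\tau_{r-1}(n/d)$, which is immediate from the interpretation of $\tau_r(n)$ as the number of ordered factorizations $n=a_1\cdots a_r$: fixing the last factor $a_r=d$ leaves exactly $\tau_{r-1}(n/d)$ choices for $a_1,\dots,a_{r-1}$. The base case $r=1$ is trivial, since $\tau_1\equiv 1$ and $\sum_{n\le x}1=\lfloor x\rfloor\ll x=x(\log x)^{0}$. (Alternatively one may start from $r=2$, where $\sum_{n\le x}\tau_2(n)=\sum_{d\le x}\lfloor x/d\rfloor\le x\sum_{d\le x}1/d\ll x\log x$.)

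For the inductive step, assume the bound for $r-1$. Interchanging the order of summation,
\begin{equation}
\sum_{n\le x}\tau_r(n)=\sum_{d\le x}\,\sum_{m\le x/d}\tau_{r-1}(m).
\end{equation}
By the induction hypothesis the inner sum is $\ll (x/d)\bigl(\log(x/d)\bigr)^{r-2}\ll (x/d)(\log x)^{r-2}$, using that $1\le x/d$ for $d\le x$, so the logarithm is nonnegative and bounded above by $\log x$. Substituting and pulling out the $d$-independent factors gives
\begin{equation}
\sum_{n\le x}\tau_r(n)\ll x(\log x)^{r-2}\sum_{d\le x}\frac{1}{d}\ll x(\log x)^{r-1},
\end{equation}
since $\sum_{d\le x}1/d\ll\log x$ for $x\ge 2$. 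This is the claimed bound, with an implied constant depending only on $r$.

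The only point requiring a little care — and the only conceivable obstacle — is the uniform application of the induction hypothesis across the whole range $1\le x/d\le x$: for $d$ near $x$ the quantity $x/d$ is close to $1$, the inner sum degenerates to the single term $m=1$, and the factor $(\log(x/d))^{r-2}$ may be very small. This causes no real difficulty, since each such range contributes $O(1)$, which is absorbed into the ($r$-dependent) implied constant. No sharper input (such as the Dirichlet hyperbola method) is needed, since we only want the order of magnitude rather than a precise main term or error estimate.
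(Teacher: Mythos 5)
Your proof is correct, but note that the paper does not actually prove this lemma: its entire ``proof'' is a citation to Chapter 1.6 of Iwaniec and Kowalski, where a sharper asymptotic (with main term $\sim x(\log x)^{r-1}/(r-1)!$) is established. Your self-contained induction on $r$ via the convolution identity $\tau_r=\tau_{r-1}*1$ is the standard elementary route and is entirely adequate here, since Theorem \ref{rfree_t} only needs an upper bound of the right order of magnitude; it has the advantage of making the paper self-contained, at the cost of giving no main term. The one point to tighten is the form of the induction hypothesis: as literally stated, $\sum_{m\le y}\tau_{r-1}(m)\ll y(\log y)^{r-2}$ fails for $y$ near $1$ when $r\ge 3$ (the left side is at least $1$ while the right side tends to $0$), so it cannot be applied uniformly over all $d\le x$ without comment. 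You do flag this, and your repair is sound in substance --- the terms with $x/d<2$ number at most $x/2$ and each contributes $O(1)$, for a total of $O(x)\ll x(\log x)^{r-1}$ --- though the phrase ``absorbed into the implied constant'' undersells it, since the total degenerate contribution is $O(x)$, not $O(1)$. The cleanest fix is to carry the hypothesis in the form $\sum_{m\le y}\tau_{r-1}(m)\le C_{r-1}\,y(1+\log y)^{r-2}$ for all $y\ge 1$; this propagates through the estimate $\sum_{d\le x}d^{-1}\le 1+\log x$ with no case analysis and yields the stated bound for, say, $x\ge 2$.
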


\begin{proof}
This estimate can be found in Chapter 1.6 of Iwaniec and Kowalski
\cite{IK}.
\end{proof}

\section{Final remarks}

The approach taken here to this problem does not
significantly exploit much of the averaging or cancellation
which should occur, and it would be interesting to
remedy this defect.
In a survey \emph{The distribution of
sequences in arithmetic progressions} Halberstam \cite{hal}
wrote, \lq\lq \ldots although Orr's argument
takes little advantage of averaging, it does seem
hard to improve upon.\rq\rq 

\end{document}